\theoremstyle{plain}
\newtheorem{theorem}{Theorem}[section]
\newtheorem*{theorem*}{Theorem}
\newtheorem{lemma}[theorem]{Lemma}
\newtheorem{prop}[theorem]{Proposition}
\newtheorem{rem}[theorem]{Remark}
\newtheorem{ex}[theorem]{Example}
\newtheorem*{mt*}{Main Theorem}
\newcommand\R{{\mathbb R}}
\newcommand{\del}{\partial}
\newcommand{\delbar}{\overline{\del}}
\title[$\delbar$-Harmonic forms on $4$-dimensional almost-Hermitian manifolds]{$\delbar$-Harmonic forms on $4$-dimensional almost-Hermitian manifolds}
\author[Nicoletta Tardini and Adriano Tomassini]{Nicoletta Tardini and Adriano Tomassini}
\address{Dipartimento di Scienze Matematiche, Fisiche e Informatiche\\
Unit\`{a} di Matematica e Informatica,
Universit\`{a} degli Studi di Parma\\
Parco Area delle Scienze 53/A, 43124 \\
Parma, Italy}
\email{nicoletta.tardini@gmail.com}
\email{nicoletta.tardini@unipr.it}
\email{adriano.tomassini@unipr.it}
\keywords{almost-complex; Hermitian metric; Hodge number; Kodaira-Thurston manifold}
\thanks{\newline 
The first author is partially supported by GNSAGA of INdAM and has financially been supported by the Programme ``FIL-Quota Incentivante'' of University of Parma and co-sponsored by Fondazione Cariparma.
The second author is partially supported by the Project PRIN 2017 ``Real and Complex Manifolds: Topology, Geometry and holomorphic dynamics'' and by GNSAGA of INdAM}
\subjclass[2010]{53C15; 58A14; 58J05}
\begin{document}

\maketitle

\begin{abstract}
Let $(X,J)$ be a $4$-dimensional compact almost-complex manifold and let $g$ be a Hermitian metric on $(X,J)$. Denote by $\Delta_{\delbar}:=\delbar\delbar^*+\delbar^*\delbar$ the $\delbar$-Laplacian. If $g$ is {\em globally conformally K\"ahler}, respectively {\em (strictly) locally conformally K\"ahler}, we prove that the dimension of the space of $\delbar$-harmonic $(1,1)$-forms on $X$, denoted as $h^{1,1}_{\delbar}$, is a topological invariant given by $b_-+1$, respectively $b_-$. As an application, we provide a one-parameter family of almost-Hermitian structures on the Kodaira-Thurston manifold for which such a dimension is $b_-$. This gives a positive answer to a question raised by T. Holt and W. Zhang. Furthermore, the previous example shows that $h^{1,1}_{\delbar}$ depends on the metric, answering to a Kodaira and Spencer's problem.
Notice that such almost-complex manifolds admit both almost-K\"ahler and (strictly) locally conformally K\"ahler metrics and this fact cannot occur on compact complex manifolds.
\end{abstract}

\section{Introduction}

Let $(X, J)$ be an almost-complex manifold, then if $J$ is not integrable one has that $\delbar^2\neq 0$ and so the Dolbeault cohomology of $X$
$$
H^{\bullet,\bullet}_{\delbar}(X):=
\frac{\text{Ker}\,\delbar}{\text{Im}\,\delbar}
$$
is not well defined.\\
However, if $g$ is a Hermitian metric on $(X,J)$ and $*$ denotes the associated Hodge-$*$-operator, then
$$
\Delta_{\delbar}:=\delbar\delbar^*+\delbar^*\delbar
$$
is a well-defined second order, elliptic, differential operator, without assuming the integrability of $J$. In particular, if $X$ is compact, then $\text{Ker}\Delta_{\delbar}$
is a finite-dimensional vector space and we will denote as usual with $h^{\bullet,\bullet}_{\delbar}$ its dimension.
 If $J$ is integrable, then the $(p,q)$-Dolbeault cohomology groups $
H^{p,q}_{\delbar}(X)$ of the compact complex manifold $(X,J)$ are isomorphic to the Kernel of $\Delta_{\delbar}$, that is 
$$
H^{p,q}_{\delbar}(X)\simeq \text{Ker}\Delta_{\delbar}\vert_{A^{p,q}(X)}\,,
$$
where $A^{p,q}(X)$ denotes the space of smooth $(p,q)$-forms on $(X,J)$
and, in particular, the dimension of the space of $\delbar$-harmonic forms is a holomorphic invariant, not depending on the choice of the Hermitian metric.
In \cite[Problem 20]{hirzebruch} Kodaira and Spencer asked whether this is the case also when $J$ is not integrable. More precisely,
\smallskip

\noindent
{\bf Question I} {\em Let $(M,J)$ be an almost-complex manifold. Choose a Hermitian metric on $(M,J)$ and consider the numbers $h^{p,q}_{\delbar}$. Is $h^{p,q}_{\delbar}$ independent of the choice of the Hermitian metric?}

In \cite{holt-zhang} Holt and Zhang answered negatively to this question, showing that there exist almost-complex structures on the Kodaira-Thurston manifold such that the Hodge number $h^{0,1}_{\delbar}$ varies with different choices of Hermitian metrics. \newline
Furthermore, in \cite[Proposition 6.1]{holt-zhang} the authors showed that for a compact $4$-dimensional almost-K\"ahler manifold $h^{1,1}_{\delbar}$ is independent of the metric, and more precisely $h^{1,1}_{\overline\partial}=b_-+1$, where $b_-$ denotes the dimension of the space of the anti-self-dual harmonic $2$-forms. In \cite[Question 6.2]{holt-zhang} the authors asked the following \smallskip

\noindent
{\bf Question II} {\em Let $(M,J)$ be a compact almost-complex $4$-dimensional manifold which admit an almost-K\"ahler structure. Does it have a non almost-K\"ahler Hermitian metric such that 
$$
h^{1,1}_{\delbar}\neq b_-+1 ?
$$
}

In this paper we study this problem. In fact, first we show that on compact almost-complex $4$-dimensional manifolds $h^{1,1}_{\delbar}$ is a conformal invariant of Hermitian metrics (see Lemma \ref{lemma:conformal}).
In particular, this means that \cite[Proposition 6.1]{holt-zhang} can be extended to Hermitian metrics that are globally conformal to an almost-K\"ahler metric (for simplicity we will call these metrics globally conformally K\"ahler, even though the almost-complex structure may not be integrable).\\
Next we show, using the existence (and uniqueness up to omotheties) of the Gauduchon representative in every conformal class, the following, see Theorem \ref{thm:dim4-gauduchon},
\vskip.2truecm
\noindent
{\bf Theorem 1}
{\em 
Let $(X^4,J)$ be a compact almost-complex manifold of dimension $4$, then,
with respect to a (strictly) locally conformally K\"ahler metric,
$$
h^{1,1}_{\delbar}=b_-.
$$
}
Here, by ({\em strictly}) {\em locally conformally K\"ahler metric} we mean a Hermitian metric $\omega$, such that
$$
d\omega=\theta\wedge\omega
$$
with $\theta$ a $d$-closed, non $d$-exact, differential $1$-form.\\
Then we show the following, see Theorem \ref{thm:dim4-lck-gck},
\vskip.2truecm
\noindent
{\bf Theorem 2}\ {\em 
Let $(X^4,J)$ be a compact almost-complex manifold of dimension $4$ and let $\omega$ be a Hermitian metric, then 
if $\omega$ is globally conformally K\"ahler (in particular if it is almost-K\"ahler), it holds
$$
h^{1,1}_{\delbar}=b_-+1.
$$
If $\omega$ is (strictly) locally conformally K\"ahler, 
$$
h^{1,1}_{\delbar}=b_-.
$$
}
In particular, for locally conformally K\"ahler and globally conformally K\"ahler metrics on compact $4$-dimensional almost-complex manifolds, $h^{1,1}_{\delbar}$ is a topological invariant. Notice that this was already known in the integrable case, by \cite[Proposition II.6]{gauduchon}.\\
We also show in Proposition \ref{prop:lower-bound} that on every compact $4$-dimensional almost-Hermitian manifold, $b_-$ is a lower bound for $h^{1,1}_{\delbar}$, that is optimal in view of Theorem \ref{thm:dim4-gauduchon}.\\
Finally, we discuss these results on explicit examples
on the Kodaira-Thurston manifold, denoted with $X$. This is a compact $2$-step nilmanifold of dimension $4$ that can be endowed with both complex and symplectic structures but cannot admit any K\"ahler metrics (see \cite{kodaira}, \cite{thurston}) and it has the structure of a principal $\mathbb{S}^1$-bundle over a $3$-torus. This manifold turns out to be a very valuable source of examples in non-K\"ahler geometry.\\
First, in example \ref{ex:kt-lck}, we construct a family of almost-complex structures $J_a$, with $a\in\mathbb{R}\setminus\left\lbrace 0\right\rbrace$, $a^2<1,$ on $X$ that admit both almost-K\"ahler and (strictly) locally conformally K\"ahler metrics 
(notice that in view of \cite{vaisman-no-lck} this could not happen in the integrable case). Namely, $(X,J_a)$ is a compact almost-complex $4$-dimensional manifold which admit an almost-K\"ahler metric $\tilde\omega_a$ and a non almost-K\"ahler Hermitian metric $\omega_a$ such that
$$
h^{1,1}_{\delbar}=b_-\neq b_-+1\,.
$$
Hence, this example answers affirmatively to \cite[Question 6.2]{holt-zhang} in the case of the Kodaira-Thurston manifold endowed with the $1$-parameter family of almost-complex structures $J_a$. \\
Moreover, this answers to Kodaira and Spencer's question, showing that also the Hodge number $h^{1,1}_{\delbar}$ depends on the Hermitian metric and not just on the almost-complex structure.\\
Then, in example \ref{ex:kt-gck} we construct on $X$ a left-invariant
almost-complex structure compatible with a family of non left-invariant globally conformally K\"ahler metrics. In particular, in this case we will have
$$
h^{1,1}_{\delbar}=b_-+1\,.
$$
In both examples we write down explicitly the $\delbar$-harmonic representatives in $\mathcal{H}^{1,1}_{\delbar}$.

\medskip

\section{Preliminaries}
\label{preliminaries}
In this Section we recall some basic facts about almost-complex and almost-Hermitian manifolds and fix some notations.
Let $X$ be a smooth manifold of dimension $2n$ and let $J$ be an almost-complex structure on $X$, namely a $(1,1)$-tensor on $X$ such that $J^2=-\text{Id}$. Then, $J$ induces on the space of forms $A^\bullet(X)$ a natural bigrading, namely
$$
A^\bullet(X)=\bigoplus_{p+q=\bullet}A^{p,q}(X)\,.
$$
Accordingly, the exterior derivative $d$ splits into four operators
$$
d:A^{p,q}(X)\to A^{p+2,q-1}(X)\oplus A^{p+1,q}(X)\oplus A^{p,q+1}(X)\oplus A^{p-1,q+2}(X)
$$
$$
d=\mu+\del+\delbar+\bar\mu\,,
$$
where $\mu$ and $\bar\mu$ are differential operators that are linear over functions. In particular, they are related to the Nijenhuis tensor $N_J$ by
$$
\left(\mu\alpha+\bar\mu\alpha\right)(u,v)=\frac{1}{4} \alpha\left(N_J(u,v)\right)
$$
where $\alpha\in A^1(X)$. Hence, $J$ is integrable, that is $J$ induces a complex structure on $X$, if and only if $\mu=\bar\mu=0$.\\
In general, since $d^2=0$ one has
$$
\left\lbrace
\begin{array}{lcl}
\mu^2 & =& 0\\
\mu\del+\del\mu & = & 0\\
\del^2+\mu\delbar+\delbar\mu & = & 0\\
\del\delbar+\delbar\del+\mu\bar\mu+\bar\mu\mu & = & 0\\
\delbar^2+\bar\mu\del+\del\bar\mu & = & 0\\
\bar\mu\delbar+\delbar\bar\mu & = & 0\\
\bar\mu^2 & =& 0
\end{array}
\right.\,.
$$
In particular, $\delbar^2\neq 0$ and so the Dolbeault cohomology of $X$
$$
H^{\bullet,\bullet}_{\delbar}(X):=
\frac{\text{Ker}\,\delbar}{\text{Im}\,\delbar}
$$
is well defined if and only if $J$ is integrable.\\
If $g$ is a Hermitian metric on $(X,J)$ with fundamental form $\omega$ and $*$ is the associated Hodge-$*$-operator, one can consider the following differential operator
$$
\Delta_{\delbar}:=\delbar\delbar^*+\delbar^*\delbar\,.
$$
This is a second order, elliptic, differential operator and we will denote its kernel by
$$
\mathcal{H}^{p,q}_{\delbar}(X):=\text{Ker}\,\Delta_{\delbar_{\vert A^{p,q}(X)}}\,.
$$
In particular, if $X$ is compact this space is finite-dimensional and its dimension will be denoted by $h^{p,q}_{\delbar}$. By \cite{holt-zhang} we know that these numbers are not holomorphic invariants, more precisely they depend on the choice of the Hermitian metric. When needed we will use the notations $\mathcal{H}^{p,q}_{\delbar,\omega}$, $h^{p,q}_{\delbar,\omega}$ in order to stress on the dependence on the Hermitian metric $\omega$.\\
However, if the Hermitian metric is almost-K\"ahler and $2n=4$, in  \cite[Proposition 6.1]{holt-zhang} it was shown that $h^{1,1}_{\delbar}=b_-+1$, depends only on the topology of $X$.\\
We recall that one can consider also other elliptic differential operators on almost-Hermitian manifolds, as done in \cite{tardini-tomassini}, as generalizations of the classical Dolbeault, Bott-Chern and Aeppli Laplacians defined on complex manifolds.\\
Moreover, a generalization of the Dolbeault cohomology in the non integrable setting was introduced and studied in \cite{cirici-wilson-1} and \cite{cirici-wilson-2}.
\medskip
\noindent

Let us fix now an almost-Hermitian metric $g$ on a compact $2n$-dimensional almost-complex manifold $(X,J)$, which we will identify with its associated $(1,1)$-form $\omega$. 
Then $J$  acts as an isomorphism on $\wedge^{p,q}X$ by $J \alpha = \sqrt{-1}^{q-p}\alpha$, $\alpha\in\wedge^{p,q}X$. Via this extension, it follows that $J^2=(-1)^k\mathrm{id}$, so that $J^{-1}=(-1)^kJ=J^*$ on $\wedge^kX$, where $J^*$ is the pointwise adjoint of $J$ with respect to some (and so any) Hermitian metric. We denote by $d^c$ the differential operator $d^c:=-J^{-1}dJ$.\\
Then, we can consider
the linear operator $L:=\omega\wedge\_$ and its adjoint $\Lambda:=L^*$. We recall that $L^{n-1}\colon \wedge^1X \to \wedge^{2n-1}X$ is an isomorphism, therefore one can define the {\em Lee form} of $\omega$, as:
$$
\theta:= \Lambda d\omega = Jd^{*}\omega \in\wedge^1X
$$
such that
$$ d\omega^{n-1}=\theta\wedge\omega^{n-1}. $$
We will say that $\omega$ is \emph{(strictly) locally conformally K\"ahler} if
$$
d\omega=\alpha\wedge\omega
$$
where $\alpha$ is a $d$-closed, non $d$-exact, $1$-form. 
In particular, in this case, the Lee form of $\omega$ is
$$
\theta=\frac{1}{n-1}\alpha.
$$
The metric $\omega$ will be called \emph{globally conformally K\"ahler} if 
$$
d\omega=\alpha\wedge\omega
$$
with $\alpha$ $d$-exact $1$-form. Indeed, if $\alpha=df$ then the metric $e^{-f}\omega$ is almost-K\"ahler.\\
Notice that, if $\tilde\omega=\Phi\omega$, with $\Phi\in\mathcal{C}^\infty(X,\mathbb{R})$, $\Phi>0$, are two conformal Hermitian metrics, then the associated Lee forms are related by
$$
\theta_{\tilde\omega}=\theta_{\omega}+(n-1)d\,\text{log}\,\Phi\,,
$$
in particular, $d\theta_{\tilde\omega}=d\theta_{\omega}$. Hence, all the Hermitian metrics conformal to a (strictly) locally conformally K\"ahler are still (strictly) locally conformally K\"ahler.\\
Another important class of Hermitian metrics is given by the \textit{Gauduchon metrics}, which are defined by  $dd^c\omega^{n-1}=0$ or equivalently as having co-closed Lee form. These metrics are a very useful tool in conformal and almost-Hermitian geometry, in view of the celebrated result by Gauduchon, \cite[Th\'eor\`eme 1]{gauduchon-CRAS}, which states that if $(M,J)$ is an $n$-dimensional compact almost-complex manifold with $n>1$, then any conformal class of any given almost-Hermitian metric contains a Gauduchon metric, unique up to multiplication with positive constants.

\section{$h^{1,1}_{\delbar}$ on compact almost-Hermitian $4$-dimensional manifolds}\label{main}

In this section we study the Hodge number $h^{1,1}_{\delbar}$ on compact almost-Hermitian $4$-dimensional manifolds.\\
We first show in arbitrary dimension the following Lemma, that ensures that in suitable degrees the Hodge numbers are conformal invariants.

\begin{lemma}\label{lemma:conformal-dim-n}
Let $(X^{2n},J)$ be a compact almost-complex manifold of dimension $2n$, then, for $p+q=n$, $h^{p,q}_{\delbar}$ is a conformal invariant of Hermitian metrics.
\end{lemma}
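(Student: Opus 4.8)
The plan is to exploit the conformal behaviour of the Hodge-$*$-operator in the critical degree $p+q=n$. Recall that if $\tilde\omega=\Phi\,\omega$ for a positive smooth function $\Phi$, then on $k$-forms the Hodge star rescales by a factor $\Phi^{n-k}$ (up to the precise convention), so that in the \emph{middle} degree $k=n$ the two Hodge stars agree: $\tilde *=*$ on $A^n(X)=\bigoplus_{p+q=n}A^{p,q}(X)$. This is the key algebraic fact, and it is exactly why the hypothesis $p+q=n$ appears in the statement. I would first record this pointwise identity carefully, tracking the convention used in the paper, and note that it respects the bigrading so that $\tilde *=*$ already as a map $A^{p,q}(X)\to A^{n-q,n-p}(X)$ for each $(p,q)$ with $p+q=n$.

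Next I would translate this into a statement about the adjoint operators. Since $\delbar^*=-*\,\del\,*$ (again up to a sign depending on conventions, and using that $*$ intertwines $\del$ and $\delbar$ up to conjugation), the equality $\tilde*=*$ in the relevant degrees forces $\tilde\delbar^{\,*}=\delbar^*$ when acting on forms whose total degree is $n$. The subtlety here is that $\delbar^*$ applied to an $(p,q)$-form with $p+q=n$ produces an $(p,q-1)$-form of total degree $n-1$, so one must check the conformal factors on \emph{both} the source degree $n$ and the target degree $n-1$; the cleanest route is to verify directly that the formula $\delbar^*=-\tilde*\,\del\,\tilde*$ collapses to $-*\,\del\,*$ because the two star operations that appear (on degree $n$ and on the intermediate degree into which $\del$ maps) recombine so that the $\Phi$-powers cancel. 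I expect this bookkeeping to be the main obstacle, since it is easy to misplace a conformal weight; the conceptual content, however, is simply that the middle-degree star is conformally invariant and that $\delbar^*$ on the middle degree is built from stars on degrees $n$ and $n-1$ in a way that is insensitive to the scaling.

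Having established $\tilde\delbar^{\,*}=\delbar^*$ on $A^{p,q}(X)$ for $p+q=n$, the harmonic condition becomes metric-independent in these degrees. Concretely, a form $\varphi\in A^{p,q}(X)$ with $p+q=n$ is $\Delta_{\delbar}$-harmonic if and only if $\delbar\varphi=0$ and $\delbar^*\varphi=0$. The first equation does not involve the metric at all, and by the previous step the second equation is identical for $\omega$ and $\tilde\omega$. Therefore
$$
\mathcal H^{p,q}_{\delbar,\tilde\omega}(X)=\mathcal H^{p,q}_{\delbar,\omega}(X)
$$
as subspaces of $A^{p,q}(X)$, and in particular $h^{p,q}_{\delbar,\tilde\omega}=h^{p,q}_{\delbar,\omega}$. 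Since any two conformal Hermitian metrics differ by such a factor $\Phi$, this proves that $h^{p,q}_{\delbar}$ is a conformal invariant in the critical bidegrees, completing the argument. The only place where compactness is used is to guarantee the clean harmonic characterisation $\Delta_{\delbar}\varphi=0\iff\delbar\varphi=\delbar^*\varphi=0$, which follows from $\langle\Delta_{\delbar}\varphi,\varphi\rangle=\|\delbar\varphi\|^2+\|\delbar^*\varphi\|^2$ on a closed manifold.
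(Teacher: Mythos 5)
Your strategy is the same as the paper's: the only input is that the Hodge star in the middle total degree $k=n$ is conformally invariant, because $*_{\tilde\omega}=\Phi^{n-k}*_{\omega}$ on $k$-forms when $\tilde\omega=\Phi\omega$. However, the intermediate identity you propose to verify, namely $\delbar^*_{\tilde\omega}=\delbar^*_{\omega}$ on $A^{p,q}(X)$ with $p+q=n$, is false: the cancellation of conformal weights you anticipate does not occur. Writing $\delbar^*=-*\,\del\,*$ (up to the usual sign and conjugation conventions), for $\psi$ of total degree $n$ the inner star lands again in degree $n$, where indeed $*_{\tilde\omega}=*_{\omega}$, but $\del$ then raises the degree to $n+1$, where $*_{\tilde\omega}=\Phi^{-1}*_{\omega}$. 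Hence
$$
\delbar^*_{\tilde\omega}\psi=-*_{\tilde\omega}\del *_{\tilde\omega}\psi=-\Phi^{-1}*_{\omega}\del *_{\omega}\psi=\Phi^{-1}\,\delbar^*_{\omega}\psi,
$$
so the two adjoints differ by the positive factor $\Phi^{-1}$. What is true, and what your argument actually needs, is only that they have the same kernel, which is immediate from the displayed identity since $\Phi>0$; the paper sidesteps the issue entirely by phrasing the harmonicity conditions as $\delbar\psi=0$ and $\del(*_{\tilde\omega}\psi)=0$, so that only the middle-degree star ever appears. With that one correction the rest of your argument --- $\delbar\psi=0$ is metric-free, the second condition is conformally invariant, hence $\mathcal{H}^{p,q}_{\delbar,\tilde\omega}=\mathcal{H}^{p,q}_{\delbar,\omega}$ and the dimensions agree --- goes through and coincides with the paper's proof.
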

\begin{proof}
Let $\tilde\omega=\Phi\omega$ be two conformal Hermitian metrics, with $\Phi$ smooth positive function on $X$. Then, for $(p,q)$-forms on a $2n$-dimensional manifold we have that the associated Hodge-$*$-operators are related by,
$$
*_{\tilde\omega}=\Phi^{n-p-q}*_{\omega}.
$$
In general we would have that $\psi\in A^{1,1}(X)$ is $\delbar$-harmonic with respect to $\tilde\omega$ if and only if
$$
\delbar\psi=0,\qquad \del*_{\tilde\omega}\psi=0
$$
if and only if
$$
\delbar\psi=0,\qquad \del(\Phi^{n-p-q}*_{\omega}\psi)=0
$$
Now we compute
\begin{eqnarray*}
\del(\Phi^{n-p-q}*_{\omega}\psi)&=&
\del\Phi^{n-p-q}\wedge *_{\omega}\psi+\Phi^{n-p-q}\del*_{\omega}\psi\\
&=&(n-p-q)\Phi^{n-p-q-1}\del\Phi\wedge *_{\omega}\psi+\Phi^{n-p-q}\del*_{\omega}\psi.
\end{eqnarray*}
Clearly, if $p+q=n$ we have that
$$
*_{\tilde\omega}=*_{\omega}.
$$
and
$$
\del*_{\tilde\omega}\psi=0\quad\iff\quad
\del*_{\omega}\psi=0.
$$
Hence, for $p+q=n$
$$
\mathcal{H}^{p,q}_{\delbar,\Phi\omega}=\mathcal{H}^{p,q}_{\delbar,\omega}.
$$
In particular, their dimensions coincide,
$$
h^{p,q}_{\delbar,\Phi\omega}=h^{p,q}_{\delbar,\omega}.
$$
\end{proof}

As a corollary we have the following

\begin{lemma}\label{lemma:conformal}
Let $(X^4,J)$ be a compact almost-complex manifold of dimension $4$, then $h^{1,1}_{\delbar}$ is a conformal invariant of Hermitian metrics.
\end{lemma}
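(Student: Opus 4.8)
The plan is to deduce this directly from Lemma \ref{lemma:conformal-dim-n}, of which it is simply the $4$-dimensional specialization. Indeed, a compact almost-complex manifold $(X^4,J)$ has real dimension $2n=4$, so $n=2$; and a $(1,1)$-form has bidegree satisfying $p+q=1+1=2=n$. Thus $(1,1)$-forms fall exactly in the middle-degree range $p+q=n$ singled out in the previous lemma.

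The one point worth recalling is \emph{why} the middle degree is the relevant one. In the proof of Lemma \ref{lemma:conformal-dim-n}, for two conformal metrics $\tilde\omega=\Phi\omega$ one uses the transformation rule $*_{\tilde\omega}=\Phi^{n-p-q}*_{\omega}$ on $(p,q)$-forms; the $\delbar$-harmonicity of $\psi$ with respect to $\tilde\omega$ amounts to $\delbar\psi=0$ together with $\del(\Phi^{n-p-q}*_{\omega}\psi)=0$. The extra term produced by differentiating the conformal factor, namely $(n-p-q)\Phi^{n-p-q-1}\del\Phi\wedge *_{\omega}\psi$, vanishes precisely when $p+q=n$, and in that case $*_{\tilde\omega}=*_{\omega}$, so the two harmonicity conditions literally coincide. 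Hence $\mathcal{H}^{1,1}_{\delbar,\Phi\omega}=\mathcal{H}^{1,1}_{\delbar,\omega}$ for every positive $\Phi$.

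Therefore I would simply apply Lemma \ref{lemma:conformal-dim-n} with $n=2$ and $(p,q)=(1,1)$ to conclude that $h^{1,1}_{\delbar}$ is a conformal invariant of Hermitian metrics. There is no genuine obstacle here: all the analytic content — the behavior of the Hodge-$*$-operator under conformal rescaling and the cancellation in middle degree — has already been carried out in the previous lemma, and the corollary is obtained by merely checking that $(1,1)$ sits in the correct degree on a $4$-manifold.
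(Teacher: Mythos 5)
Your proposal is correct and matches the paper exactly: the paper states this lemma as an immediate corollary of Lemma \ref{lemma:conformal-dim-n}, obtained by taking $n=2$ and $(p,q)=(1,1)$, which is precisely what you do. Your recap of why the middle degree $p+q=n$ makes the conformal factor drop out is accurate and consistent with the proof of the general lemma.
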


As an immediate application we have the following
\begin{prop}
Let $(X^4,J)$ be a compact almost-complex manifold of dimension $4$, then,
with respect to a globally conformally K\"ahler metric,
$$
h^{1,1}_{\delbar}=b_-+1.
$$
\end{prop}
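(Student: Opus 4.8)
The plan is to reduce immediately to the almost-K\"ahler case by exploiting conformal invariance. By definition, a globally conformally K\"ahler metric $\omega$ satisfies $d\omega=\alpha\wedge\omega$ with $\alpha=df$ a $d$-exact $1$-form; as recorded in Section \ref{preliminaries}, the conformally rescaled metric $\tilde\omega=e^{-f}\omega$ is then almost-K\"ahler. So the first step is simply to produce this almost-K\"ahler representative $\tilde\omega$ inside the conformal class of $\omega$.

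Next I would invoke Holt--Zhang's \cite[Proposition 6.1]{holt-zhang}, which gives $h^{1,1}_{\delbar,\tilde\omega}=b_-+1$ for the almost-K\"ahler metric $\tilde\omega$, where $b_-$ is the dimension of the space of anti-self-dual harmonic $2$-forms and is a purely topological quantity.

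Finally, I would apply Lemma \ref{lemma:conformal}, which asserts that in dimension $4$ the number $h^{1,1}_{\delbar}$ is unchanged under conformal rescaling of the Hermitian metric. Since $\omega$ and $\tilde\omega=e^{-f}\omega$ are conformal, this yields $h^{1,1}_{\delbar,\omega}=h^{1,1}_{\delbar,\tilde\omega}=b_-+1$, which is the desired equality.

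Because both ingredients are already available, there is no substantive obstacle here: the entire content is carried by Lemma \ref{lemma:conformal}, whose proof rests on the fact that the Hodge star is conformally invariant on middle-degree forms (the exponent $n-p-q$ vanishes precisely when $p+q=n=2$). The only point requiring care is to confirm that ``globally conformally K\"ahler'' is indeed defined via a $d$-exact form $\alpha=df$, so that the explicit rescaling $e^{-f}\omega$ genuinely lands in the almost-K\"ahler class and the chain $\omega \rightsquigarrow \tilde\omega \rightsquigarrow$ Holt--Zhang $\rightsquigarrow$ Lemma \ref{lemma:conformal} closes up.
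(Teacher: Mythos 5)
Your proposal is correct and follows exactly the paper's own argument: pass to the almost-K\"ahler representative $e^{-f}\omega$ in the conformal class, apply Holt--Zhang's result for almost-K\"ahler metrics, and conclude by the conformal invariance of $h^{1,1}_{\delbar}$ in middle degree from Lemma \ref{lemma:conformal}. No gaps.
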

\begin{proof}
Since $h^{1,1}_{\delbar}$ is a conformal invariant, the result follows by \cite{holt-zhang}. Indeed, for almost-K\"ahler metrics $h^{1,1}_{\delbar}=b_-+1$.
\end{proof}

We first prove the following (cf. \cite[Proposition II.6]{gauduchon} for the integrable case)
\begin{prop}\label{prop:f-const}
Let $(X^4,J)$ be a compact almost-complex manifold of dimension $4$ and let $\omega$ be a Gauduchon metric, then the trace of a $\delbar$-harmonic $(1,1)$-form is constant. Namely, if $\psi\in\mathcal{H}^{1,1}_{\delbar}$ is written as
$$
\psi=f\omega+\gamma\qquad\text{with\,}*\gamma=-\gamma\,,
$$
then $f$ is constant.
\end{prop}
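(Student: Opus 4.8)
The plan is to reduce the statement to a single integral identity for the trace function $f$ and to read off from it that $df=0$. By the characterisation of $\delbar$-harmonicity recorded in the proof of Lemma \ref{lemma:conformal-dim-n}, the hypothesis $\psi\in\mathcal{H}^{1,1}_{\delbar}$ is equivalent to $\delbar\psi=0$ together with $\del*\psi=0$. Since a $(1,1)$-form splits as $\psi=f\omega+\gamma$ with $*\gamma=-\gamma$ and $*\omega=\omega$, we have $*\psi=f\omega-\gamma$, so these two conditions become
$$
\del(f\omega)=\del\gamma,\qquad \delbar(f\omega)=-\delbar\gamma.
$$
Throughout I would use that in complex dimension $2$ the spaces $A^{3,0}$ and $A^{0,3}$ vanish, so that $\mu\omega=\bar\mu\omega=0$, $d\omega=\del\omega+\delbar\omega$, and on $(1,1)$-forms $\del\delbar=-\delbar\del$ (the structure equations show that the only obstruction $\mu\bar\mu+\bar\mu\mu$ lands in zero-dimensional spaces).

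First I would convert the Gauduchon hypothesis into the pointwise statement $\del\delbar\omega=0$. Using $d\omega=\del\omega+\delbar\omega$ and $d^c\omega=-\sqrt{-1}(\del\omega-\delbar\omega)$, together with the vanishing of $A^{3,0},A^{0,3}$ and $\del\delbar=-\delbar\del$ on $(1,1)$-forms, one computes $dd^c\omega=2\sqrt{-1}\,\del\delbar\omega$; hence $dd^c\omega=0$ if and only if $\del\delbar\omega=0$. This is the only place the Gauduchon condition enters. Next I rewrite the first equation as $\del f\wedge\omega=\del\gamma-f\del\omega$, wedge it with $\delbar\bar f=\overline{\del f}$, and integrate. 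The left-hand side is $-\int_X\del f\wedge\overline{\del f}\wedge\omega=\sqrt{-1}\,P$, where $P:=\int_X\sqrt{-1}\,\del f\wedge\overline{\del f}\wedge\omega\ge0$ and $P=0$ exactly when $\del f=0$.

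For the right-hand side I integrate by parts via Stokes — all competing bidegree components of the relevant $3$-forms vanish for dimension reasons — then substitute $\delbar\gamma=-\delbar(f\omega)$ and use $\delbar\del\gamma=-\del\delbar\gamma=\del\delbar(f\omega)$. Expanding $\del\delbar(f\omega)$ produces five terms; a further integration by parts makes two of them cancel, the term $I_4=-\int_X|f|^2\,\del\delbar\omega$ is killed by the Gauduchon identity $\del\delbar\omega=0$, and one is left with
$$
\sqrt{-1}\,(P+Q)=I_3+I_5,
$$
where $Q:=\int_X\sqrt{-1}\,\del\bar f\wedge\delbar f\wedge\omega\ge0$ vanishes exactly when $\delbar f=0$, and $I_3,I_5$ are the two surviving integrals linear in the torsion $\del\omega,\delbar\omega$. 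A direct conjugation, using that $\omega$ and the Lee form are real and that $\overline{\del f}=\delbar\bar f$, shows $I_5=\overline{I_3}$, so $I_3+I_5=2\,\Re I_3\in\R$. Thus the right-hand side is real while $\sqrt{-1}(P+Q)$ is purely imaginary; both must vanish, and since $P,Q\ge0$ this forces $\del f=\delbar f=0$, i.e. $df=0$, so $f$ is constant.

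The main obstacle I anticipate is the bookkeeping of the torsion terms generated by $\del\omega$ and $\delbar\omega$ across the two integrations by parts: one must arrange them so that the second-order and the mismatched first-order pieces either cancel outright or collapse onto the single Gauduchon term $I_4$, leaving behind only the conjugate pair. The clean conclusion then rests on two clarifying observations that isolate the geometry, namely that in dimension four the Gauduchon condition is exactly $\del\delbar\omega=0$, and that the leftover torsion integrals assemble into a real quantity, so that comparing it with the purely imaginary Dirichlet term $\sqrt{-1}(P+Q)$ forces $df=0$.
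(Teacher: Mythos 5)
Your proof is correct, but it takes a genuinely different route from the paper's. Both arguments start from the same two equations $\del(f\omega)=\del\gamma$ and $\delbar(f\omega)=-\delbar\gamma$. The paper sums them to obtain $d(f\omega)=\sqrt{-1}\,d^c\gamma$, deduces $dd^c(f\omega)=0$, and then performs a pointwise computation with the Lee form (via $\Lambda(dJ\alpha)=-d^*\alpha-\langle\alpha,\theta\rangle$) to identify this with the equation $\Delta f-\langle df,\theta\rangle=0$, i.e.\ $f\in\Ker L^*$ for the Gauduchon operator; the conclusion then rests on Gauduchon's theorem that elements of $\Ker L^*$ have constant sign together with the uniqueness, up to constants, of the Gauduchon representative in a conformal class. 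You instead run a global integration-by-parts (Bochner-type) argument: wedge one equation with $\delbar\bar f$, integrate, substitute the other via $\delbar\del\gamma=\del\delbar(f\omega)$, and use the Gauduchon hypothesis only in the pointwise form $\del\delbar\omega=0$ (which is indeed equivalent to $dd^c\omega=0$ in complex dimension $2$). I checked the bookkeeping: the term $\int_X\bar f\,\delbar f\wedge\del\omega$ produced by the second integration by parts cancels its counterpart from expanding $\del\delbar(f\omega)$, the $|f|^2\,\del\delbar\omega$ term dies, and the two surviving torsion integrals are complex conjugates of one another, so the resulting identity equates a purely imaginary nonnegative quantity $\pm\sqrt{-1}(P+Q)$ with a real one and forces $P=Q=0$, hence $df=\del f+\delbar f=0$. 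Your approach is more elementary and self-contained: it avoids Gauduchon's sign theorem and the uniqueness of Gauduchon metrics, and it handles complex-valued $f$ directly, whereas the paper's last step implicitly takes $f$ real (one would apply it to the real and imaginary parts separately). What the paper's route buys is the explicit identification of the equation as $L^*f=0$, which connects to the conformal machinery used elsewhere in the paper; what yours buys is economy of prerequisites at the price of delicate sign bookkeeping, which your sketch leaves partly implicit. Two harmless inaccuracies: expanding $\del\delbar(f\omega)$ gives four terms, not five, and the obstruction $\mu\bar\mu+\bar\mu\mu$ on $(1,1)$-forms vanishes because the intermediate spaces $A^{3,0}$ and $A^{0,3}$ are zero, not the target space.
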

\begin{proof}
Let $\psi\in A^{1,1}(X)$ be a $\delbar$-harmonic $(1,1)$-form. Then,
$$
\psi=f\omega+\gamma
$$
with $\gamma$ anti-self dual $(1,1)$-form, namely $*\gamma=-\gamma$, and $f=\frac{1}{2}\text{tr\,}\psi=\frac{1}{2}\Lambda\psi=\frac{1}{2}\langle\psi,\omega\rangle$.
Hence,
$$
*\psi=f\omega-\gamma.
$$
Since $\psi$ is $\delbar$-harmonic , then $\delbar\psi=0$ and $\del*\psi=0$, namely
$$
\delbar(f\omega)=-\delbar\gamma
$$
and
$$
\del(f\omega)=\del\gamma.
$$
Recalling that for $(1,1)$-forms on a $4$-dimensional manifolds we have that 
$$d^c=i(\delbar-\del)
$$ 
and 
$$
dd^c+d^cd=0;
$$
summing up the previous two equations we get
$$
d(f\omega)=id^c\gamma,
$$
hence
$$
dd^c(f\omega)=0.
$$
Since $\omega$ is Gauduchon this implies that $f$ is constant. For completeness we recall here the proof (cf. for instance also the proof in \cite[Theorem 10]{angella-istrati-otiman-tardini}). Since, on a $4$-dimensional manifold $d\omega=\theta\wedge\omega$, where $\theta$ is the Lee form of $\omega$, we have
\begin{eqnarray*}
dd^c(f\omega)&=&d(d^cf\wedge\omega+fJ\theta \wedge\omega)\\
&=&dd^cf\wedge\omega-d^cf\wedge\theta\wedge\omega +df\wedge J\theta \wedge\omega \\
&&+fdJ\theta \wedge\omega +f\theta \wedge J\theta \wedge\omega \\
&=&\left(dd^cf-d^cf\wedge\theta +df\wedge J\theta +fdJ\theta +f\theta \wedge J\theta \right)\wedge\omega\\
&=&\Lambda \left(dd^cf-d^cf\wedge\theta +df\wedge J\theta +fdJ\theta +f\theta \wedge J\theta \right)\frac{\omega^2}{2}.
\end{eqnarray*}

Therefore, $dd^c(f\omega )=0$ is equivalent to:
$$
\Lambda \left(dd^cf-d^cf\wedge\theta +df\wedge J\theta +fdJ\theta +f\theta \wedge J\theta \right)=0
$$
Recall now that from \cite[Lemma 7]{angella-istrati-otiman-tardini} on an almost-Hermitian manifold we have, for every $1$-form $\alpha$,
$$
\Lambda(dJ\alpha)=-d^*\alpha-\langle\alpha,\theta\rangle.
$$
Therefore,
$$
\Lambda \left(dd^cf-d^cf\wedge\theta +df\wedge J\theta +fdJ\theta +f\theta \wedge J\theta \right)=0
$$
if and only if
$$
\Lambda dd^cf-\Lambda(d^cf\wedge\theta)+\Lambda(df\wedge J\theta)-
fd^*\theta-f\vert\theta\vert^2+f\Lambda(\theta\wedge J\theta)=0.
$$
Since, $\omega$ is Gauduchon we have $d^*\theta=0$, hence the last equation is equivalent to
$$
-\Delta f-\langle df,\theta\rangle
-\Lambda(d^cf\wedge\theta)+\Lambda(df\wedge J\theta)-
fd^*\theta-f\vert\theta\vert^2+f\Lambda(\theta\wedge J\theta)=0.
$$
This holds if and only if
$$
-\Delta f-\langle df,\theta\rangle+2\langle df,\theta\rangle-
f\vert\theta\vert^2+f\vert\theta\vert^2=0.
$$
Therefore, we have obtained that $dd^c(f\omega )=0$ if and only if
$$
-\Delta f+\langle df,\theta\rangle=0.
$$
Namely, $f\in\text{Ker}\,L^*$ where, for Gauduchon metrics,
$$
L^*(h)=\Delta h-\langle dh,\theta\rangle
$$
is the adjoint of the operator $L$, with
$$
L(h)=\Delta h+\langle dh,\theta\rangle.
$$
Now, by \cite{gauduchon} (cf. also \cite{gauduchon-CRAS}) $f$ is either positive or negative (unless $f=0$). Suppose that $f>0$ (otherwise one can argue with $-f$), then by $dd^c(f\omega)=0$ we have that $f\omega$ is a Gauduchon metric conformal to $\omega$, and so $f$ is constant.
\end{proof}

\begin{rem}\label{rem:theta-d*-exact}
{\rm
In the proof of the previous proposition we had that
$$
d(f\omega)=id^c\gamma.
$$
Since, $f$ is constant and $d\omega=\theta\wedge\omega$ we have that applying the Hodge-$*$-operator,
$$
f*(\theta\wedge\omega)=i*d^c\gamma.
$$
Since $\theta$ is a primitive form, one has that $*(\theta\wedge\omega)=*L\theta=-J\theta$. Moreover, using that $J\gamma=\gamma$ and $*\gamma=-\gamma$ one obtain that
$$
f\theta=-id^*\gamma.
$$
In particular, if $\theta\neq 0$ (i.e., $\omega$ is not almost-K\"ahler), we have that
$$
f=0\qquad\iff\qquad d^*\gamma=0
\qquad\iff\qquad\gamma\text{ is harmonic.}
$$
}\end{rem}

As a consequence, we prove that with respect to (strictly) locally conformally K\"ahler structures, $h^{1,1}_{\delbar}$ is a topological invariant.

\begin{theorem}\label{thm:dim4-gauduchon}
Let $(X^4,J)$ be a compact almost-complex manifold of dimension $4$
and suppose that there exists a
(strictly) locally conformally K\"ahler metric, then
$$
h^{1,1}_{\delbar}=b_-.
$$
\end{theorem}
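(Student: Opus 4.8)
The plan is to reduce to a convenient representative of the conformal class and then to identify $\mathcal{H}^{1,1}_{\delbar}$ with the space of anti-self-dual harmonic $2$-forms. By Lemma \ref{lemma:conformal} the number $h^{1,1}_{\delbar}$ is a conformal invariant in dimension $4$, and, as recalled in the preliminaries, every Hermitian metric conformal to a (strictly) locally conformally K\"ahler one is again (strictly) locally conformally K\"ahler. Hence I may replace the given metric by the Gauduchon representative of its conformal class, which exists and is unique up to homotheties by Gauduchon's theorem. Call this representative $\omega$: it is simultaneously strictly locally conformally K\"ahler, so that its Lee form $\theta$ is $d$-closed and non $d$-exact (in particular $\theta\neq 0$), and Gauduchon, so that $d^{*}\theta=0$. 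Thus $\theta$ is a nonzero harmonic $1$-form, and $h^{1,1}_{\delbar}$ may be computed with respect to $\omega$.

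Next I would take an arbitrary $\psi\in\mathcal{H}^{1,1}_{\delbar}$ and write $\psi=f\omega+\gamma$ with $*\gamma=-\gamma$, as in Proposition \ref{prop:f-const}. That proposition gives that $f$ is constant, and Remark \ref{rem:theta-d*-exact} yields the identity $f\theta=-i\,d^{*}\gamma$. The crux is to deduce $f=0$: pairing this identity in $L^{2}$ with $\theta$ and moving $d^{*}$ onto its adjoint gives $f\,\|\theta\|^{2}_{L^{2}}=-i\langle\gamma,d\theta\rangle=0$, since $d\theta=0$ because $\omega$ is locally conformally K\"ahler. As $\theta$ is non-exact, hence nonzero, one has $\|\theta\|^{2}_{L^{2}}>0$ and therefore $f=0$. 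I expect this to be the main obstacle: it is the step where both hypotheses genuinely enter, the Gauduchon condition to make $f$ constant and strict local conformal K\"ahlerness to force a nonzero closed Lee form, and it is exactly the point that distinguishes this case from the globally conformally K\"ahler one, where $\theta$ is exact and $f$ need not vanish, producing the extra $+1$.

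Once $f=0$, the form $\psi=\gamma$ is an anti-self-dual $(1,1)$-form, and I would verify that it is in fact $d$-harmonic. The relation $f\theta=-i\,d^{*}\gamma$ now gives $d^{*}\gamma=0$; moreover $d\gamma=\del\gamma+\delbar\gamma$, because the $\mu\gamma$ and $\bar\mu\gamma$ components lie in $\wedge^{3,0}X$ and $\wedge^{0,3}X$ respectively, which vanish on a $4$-manifold, and $\delbar\gamma=\delbar\psi=0$ together with $\del\gamma=-\del *\psi=0$ force $d\gamma=0$. Conversely, any anti-self-dual $d$-harmonic $(1,1)$-form $\gamma$ satisfies $\delbar\gamma=0$ and $\del *\gamma=-\del\gamma=0$, again by splitting $d\gamma=0$ according to bidegree, so it belongs to $\mathcal{H}^{1,1}_{\delbar}$. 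Hence $\mathcal{H}^{1,1}_{\delbar}$ coincides with the space of anti-self-dual $d$-harmonic $(1,1)$-forms.

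Finally I would compute the dimension. On a $4$-dimensional almost-Hermitian manifold the real anti-self-dual $2$-forms are exactly the real primitive $(1,1)$-forms, since $*\omega=\omega$ and the self-dual part is spanned by $\omega$ together with the real forms underlying $\wedge^{2,0}X\oplus\wedge^{0,2}X$. Consequently every complex anti-self-dual harmonic $2$-form is automatically of type $(1,1)$, so the space of anti-self-dual $d$-harmonic $(1,1)$-forms is the complexification of the space of real anti-self-dual harmonic $2$-forms, whose real dimension is $b_{-}$. Therefore $h^{1,1}_{\delbar}=\dim_{\mathbb{C}}\mathcal{H}^{1,1}_{\delbar}=b_{-}$, as claimed.
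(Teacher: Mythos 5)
Your proof is correct and follows essentially the same route as the paper: conformal reduction to the Gauduchon representative (still strictly locally conformally K\"ahler), the decomposition $\psi=f\omega+\gamma$ with $f$ constant via Proposition \ref{prop:f-const}, and the identity $f\theta=-i\,d^{*}\gamma$ from Remark \ref{rem:theta-d*-exact} forcing $f=0$. The only cosmetic differences are that you obtain $f=0$ by pairing with $\theta$ in $L^{2}$, whereas the paper phrases the same orthogonality as ``$\theta$ is $d$-closed and $d^{*}$-exact, hence zero, contradicting $d\omega\neq 0$'', and that you spell out the final identification of $\mathcal{H}^{1,1}_{\delbar}$ with the complexified space of anti-self-dual harmonic forms (of dimension $b_-$), which the paper leaves implicit via Proposition \ref{prop:lower-bound}.
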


\begin{proof}
Let $\tilde\omega$ be a (strictly) locally conformally K\"ahler metric on $(X^4,J)$.
Since, by Lemma \ref{lemma:conformal}, $h^{1,1}_{\delbar}$ is a conformal invariant we fix in the conformal class of $\tilde\omega$ the Gauduchon representative $\omega$ of volume $1$. Clearly, $\omega$ is still (strictly) locally conformally-K\"ahler.\\
Let $\psi\in A^{1,1}(X)$ be a $\delbar$-harmonic $(1,1)$-form. Then,
$$
\psi=f\omega+\gamma
$$
with $*\gamma=-\gamma$ and $f$ constant by Proposition \ref{prop:f-const}.
By Remark \ref{rem:theta-d*-exact},
$$
f\theta=d^*(-i\gamma)\in\text{Im\,}d^*.
$$
Now, we want to show that $f=0$. Suppose by contradiction that $f\neq 0$, hence
$$
\theta=d^*\left(-\frac{i}{f}\gamma\right)\in\text{Im\,}d^*,
$$
but, since $\omega$ is still (strictly) locally conformally-K\"ahler, $d\theta=0$.
So $\theta$ is $d$-closed and $d^*$-exact and so $\theta=0$, but this is absurd since $d\omega\neq 0$.
Therefore, $f=0$ and
$$
\psi=\gamma\qquad\text{with}\qquad *\gamma=-\gamma
$$
with $d^*\gamma=0$, that is $\gamma$ harmonic, concluding the proof.
\end{proof}

%

In particular, as a consequence of 
Lemma \ref{lemma:conformal}, Theorem \ref{thm:dim4-gauduchon} and \cite[Proposition 6.1]{holt-zhang}, for locally conformally K\"ahler and globally conformally K\"ahler metrics on compact $4$-dimensional almost-complex manifolds, $h^{1,1}_{\delbar}$ is a topological invariant. Namely, we have proven the following

\begin{theorem}\label{thm:dim4-lck-gck}
Let $(X^4,J)$ be a compact almost-complex manifold of dimension $4$ and let $\omega$ be a Hermitian metric, then 
if $\omega$ is globally conformally K\"ahler (in particular if it is almost-K\"ahler), it holds
$$
h^{1,1}_{\delbar}=b_-+1.
$$
If $\omega$ is (strictly) locally conformally K\"ahler, 
$$
h^{1,1}_{\delbar}=b_-.
$$
\end{theorem}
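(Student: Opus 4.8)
The plan is to assemble this statement from the three ingredients already in hand: the conformal invariance of $h^{1,1}_{\delbar}$ in dimension $4$ (Lemma \ref{lemma:conformal}), the almost-K\"ahler computation $h^{1,1}_{\delbar}=b_-+1$ from \cite[Proposition 6.1]{holt-zhang}, and the (strictly) locally conformally K\"ahler computation $h^{1,1}_{\delbar}=b_-$ from Theorem \ref{thm:dim4-gauduchon}. Since both cases of the theorem reduce to results proved earlier in the paper, the essential analytic work is already behind us, and what remains is bookkeeping.

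For the globally conformally K\"ahler case I would first recall that, by definition, $\omega$ satisfies $d\omega=df\wedge\omega$ for some $f\in\mathcal{C}^\infty(X,\R)$. A direct computation then gives
$$
d(e^{-f}\omega)=-e^{-f}\,df\wedge\omega+e^{-f}\,d\omega=-e^{-f}\,df\wedge\omega+e^{-f}\,df\wedge\omega=0,
$$
so the conformally rescaled metric $e^{-f}\omega$ is almost-K\"ahler and lies in the conformal class of $\omega$. By Lemma \ref{lemma:conformal}, $h^{1,1}_{\delbar}$ is unchanged within a conformal class, so $h^{1,1}_{\delbar,\omega}=h^{1,1}_{\delbar,e^{-f}\omega}$, and applying \cite[Proposition 6.1]{holt-zhang} to the almost-K\"ahler representative $e^{-f}\omega$ yields $h^{1,1}_{\delbar}=b_-+1$. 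This is precisely the content of the Proposition established immediately after Lemma \ref{lemma:conformal}, so I would simply invoke it.

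For the (strictly) locally conformally K\"ahler case there is nothing further to do: the assertion $h^{1,1}_{\delbar}=b_-$ is exactly Theorem \ref{thm:dim4-gauduchon}, whose hypothesis is the existence of such a metric on $(X^4,J)$. One may then observe that, since $b_-$ is a topological quantity, both formulas exhibit $h^{1,1}_{\delbar}$ as a topological invariant within each of these two classes of metrics.

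Because both assertions are immediate consequences of earlier results, I do not expect a genuine obstacle at this stage; the theorem is a synthesis of what precedes it. The only point deserving a moment's care is verifying that the globally conformally K\"ahler hypothesis really does place an almost-K\"ahler metric in the conformal class, which is exactly the display $d(e^{-f}\omega)=0$ above. All of the substantive difficulty sits upstream—in the constancy of the trace for Gauduchon metrics (Proposition \ref{prop:f-const}), in the contradiction argument forcing $f=0$ in the strictly locally conformally K\"ahler setting (Theorem \ref{thm:dim4-gauduchon}), and in the Holt--Zhang almost-K\"ahler computation—none of which needs to be revisited here.
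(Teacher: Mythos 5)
Your proposal is correct and follows exactly the paper's own route: the theorem is stated there as an immediate consequence of Lemma \ref{lemma:conformal}, the preceding Proposition (conformal rescaling to the almost-K\"ahler representative $e^{-f}\omega$ plus \cite[Proposition 6.1]{holt-zhang}), and Theorem \ref{thm:dim4-gauduchon}. Your explicit check that $d(e^{-f}\omega)=0$ matches the remark the paper makes when defining globally conformally K\"ahler metrics, so there is nothing to add.
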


Notice that in the integrable case, $h^{1,1}_{\delbar}$ only depends on the complex structure and for compact complex surfaces this result is known (cf. \cite[Proposition II.6]{gauduchon}). Indeed, recall that a compact complex surface is K\"ahler if and only $b_1$ is even and in this case $h^{1,1}_{\delbar}=b_-+1$. On the other side, a compact complex surface is non-K\"ahler if and only $b_1$ is odd and in this case $h^{1,1}_{\delbar}=b_-$.
This is coherent with our result since on compact K\"ahler surfaces there exist no (strictly) locally conformally K\"ahler metrics by \cite{vaisman-no-lck}. In fact, this last statement holds more generally, indeed by (\cite[Theorem 2.1, Remark (1)]{vaisman-no-lck}, on compact complex manifolds satisfying the $\del\delbar$-lemma every locally conformally K\"ahler structure is also globally conformally K\"ahler. 
We want to point out that the first non-integrable examples of almost-K\"ahler manifolds admitting (strictly) locally conformally K\"ahler manifolds appeared in \cite{vaisman-lcak}. In Section \ref{section:examples} we will construct a new family of examples on the Kodaira-Thurston manifold.\\
In view of these results, we ask whether there exist examples of Hermitian metrics on compact almost-complex $4$-dimensional manifolds with $h^{1,1}_{\delbar}$ different from $b_-$ and $b_-+1$.\\

The following result gives a general estimate for $h^{1,1}_{\delbar}$.
\begin{prop}\label{prop:lower-bound}
Let $(X^4,J)$ be a compact almost-complex manifold of dimension $4$ and let $\omega$ be a Hermitian metric, then
$$
\mathcal{H}_g^-\subseteq \mathcal{H}^{1,1}_{\delbar},
$$
where $\mathcal{H}_g^-$ denotes the space of anti-self-dual harmonic $(1,1)$-forms.\\
In particular,
$$
h^{1,1}_{\delbar}\geq b_{-}\,.
$$
\end{prop}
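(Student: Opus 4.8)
The plan is to show directly that every element of $\mathcal{H}_g^-$ satisfies the two conditions that characterize $\bar\partial$-harmonic $(1,1)$-forms and that were already isolated in the proof of Lemma \ref{lemma:conformal-dim-n}, namely $\bar\partial\gamma=0$ and $\partial *\gamma=0$. So I would fix an anti-self-dual harmonic $(1,1)$-form $\gamma$, for which $*\gamma=-\gamma$ and $d\gamma=d^*\gamma=0$, and then verify these two conditions.

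The key observation is a pure bidegree count. Since $\gamma\in A^{1,1}(X)$ and $d=\mu+\partial+\bar\partial+\bar\mu$, the four summands of $d\gamma$ lie respectively in $A^{3,0}(X)$, $A^{2,1}(X)$, $A^{1,2}(X)$ and $A^{0,3}(X)$, which are pairwise distinct bidegrees on a $4$-dimensional manifold. Hence $d\gamma=0$ forces each component to vanish separately; in particular $\partial\gamma=0$ and $\bar\partial\gamma=0$. The first of these is already one of the two required conditions. For the second, I would feed anti-self-duality into the computation: since $*\gamma=-\gamma$, one has $\partial *\gamma=-\partial\gamma=0$. Thus $\gamma$ satisfies both $\bar\partial\gamma=0$ and $\partial *\gamma=0$, i.e.\ $\gamma\in\mathcal{H}^{1,1}_{\bar\partial}(X)$, which gives the inclusion $\mathcal{H}_g^-\subseteq\mathcal{H}^{1,1}_{\bar\partial}$. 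I expect no genuine difficulty at this stage: the whole point is that in this bidegree $d$-closedness already splits into separate $\partial$- and $\bar\partial$-closedness, so that harmonicity of $\gamma$ is far stronger than what is needed, and anti-self-duality converts the $\partial$-closedness into the co-closedness condition $\partial *\gamma=0$.

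For the dimension estimate I would recall the standard fact that in real dimension $4$ the bundle $\wedge^-$ of anti-self-dual $2$-forms coincides with the bundle of real primitive $(1,1)$-forms; consequently every anti-self-dual harmonic $2$-form is automatically of type $(1,1)$, and by Hodge theory $\mathcal{H}_g^-$ has dimension $b_-$. The only point requiring a word of care is the passage from real to complex dimensions: a family of real forms that is $\mathbb{R}$-linearly independent remains $\mathbb{C}$-linearly independent inside the complex vector space $\mathcal{H}^{1,1}_{\bar\partial}$, so the inclusion just established yields $h^{1,1}_{\bar\partial}=\dim_{\mathbb{C}}\mathcal{H}^{1,1}_{\bar\partial}\geq b_-$, as claimed. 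Finally, Theorem \ref{thm:dim4-gauduchon} shows this bound is attained for (strictly) locally conformally K\"ahler metrics, so it is optimal.
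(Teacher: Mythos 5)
Your proposal is correct and follows essentially the same route as the paper: the paper's proof likewise takes an anti-self-dual harmonic $\gamma$, notes that $d\gamma=0$ splits by bidegree to give $\bar\partial\gamma=0$ and $\partial\gamma=0$, and then uses $*\gamma=-\gamma$ to get $\partial*\gamma=-\partial\gamma=0$. Your additional remarks on why $\dim\mathcal{H}_g^-=b_-$ and on real versus complex independence are correct elaborations of points the paper leaves implicit.
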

\begin{proof}
Let $\gamma$ be an anti-self-dual $(1,1)$-form, namely $*\gamma=-\gamma$. Suppose that $\gamma$ is harmonic, that is equivalent to $d\gamma=0$. Hence, for degree reasons
$$
\delbar\gamma=0\qquad\text{and}\qquad \del*\gamma=-\del\gamma=0.
$$
Therefore, $\gamma$ is $\delbar$-harmonic.
\end{proof}

In particular, Theorem \ref{thm:dim4-lck-gck} shows that the minimum $h^{1,1}_{\delbar}$ is reached by strictly locally conformally K\"ahler metrics.\\

\section{Explicit constructions on the Kodaira-Thurston manifold}\label{section:examples}

In this section we apply the results obtained in Section \ref{main}  to construct explicit examples.
First, we recall the definition of the Kodaira-Thurston manifold $X$. Let
$$
\mathbb{H}_3(\mathbb{R}):=
\left\lbrace
\left[\begin{matrix}
1 &  x_1 & x_3\\
0 & 1 & x_2\\
0 & 0 &1\\
\end{matrix}\right]
\mid x_1,x_2,x_3\in\mathbb{R}\right\rbrace\,
$$
be the $3$-dimensional Heisenberg group and let $\Gamma$ be the subgroup of $\mathbb{H}_3(\mathbb{R})$ of the matrices with integral entries. Then,
$$
X:=\frac{\mathbb{H}_3(\mathbb{R})}{\Gamma}\times \mathbb{S}^1
$$
is a compact $4$-dimensional manifold admitting both complex and symplectic structures but no K\"ahler structures. Denoting with $x_4$ the coordinate on $\mathbb{S}^1$, a global frame on $X$ is given by
$$
e_1:=\partial_{x_1}\,,\quad
e_2:=\partial_{x_2}+x_1\partial_{x_3}\,,\quad
e_3:=\partial_{x_3}\,,\quad
e_4:=\partial_{x_4}\,,
$$
and its dual coframe is
$$
e^1:=dx_1\,,\quad
e^2:=dx_2\,,\quad
e^3:=dx_3-x_1dx_2\,,\quad
e^4:=dx_4.
$$
In particular, the only non trivial bracket is $\left[e_1,e_2\right]=e_3$
and so the structure equations become
$$
de^1=de^2=de^4=0\,,\quad
de^3=-e^1\wedge e^2\,.
$$
In the sequel we denote by $e^{ij}=e^i\wedge e^j$ and similarly. \\

Then
$$
H^2_{dR}(X;\R)\simeq\hbox{\rm Span}_\R\langle [e^{13}-e^{24}],[e^{14}+e^{23}],[e^{13}+e^{24}],[e^{14}-e^{23}]\rangle
$$
where all the representatives are harmonic, with respect to the Riemannian metric $g=\displaystyle\sum_{j=1}^4 e^j\otimes e^j$. Furthemore, the space of $d$-harmonic anti-self dual forms is isomorphic to  
$$
\hbox{\rm Span}_\R\langle [e^{13}+e^{24}],[e^{14}-e^{23}]\rangle,
$$
so that $b_-(X)=2$.

\begin{ex}\label{ex:kt-lck}{\rm
Now we construct the following family of almost-complex structures $J_a$ on $X$, with $a\in\mathbb{R}$, setting as coframe of $(1,0)$-forms
$$
\Phi^1_a:=(e^1+ae^4)+ie^3\,,\qquad
\Phi^2_a:=e^2+ie^4\,.
$$
We will use the notation $\Phi^1=\Phi^1_a$, $\Phi^2=\Phi^2_a$.
The dual $(1,0)$-frame of vector fields is given by
$$
V_1:=\frac{1}{2}(e_1-ie_3)\,,\qquad
V_2:=\frac{1}{2}(e_2-i(e_4-ae_1)).
$$
One can show directly that the complex structure equations become
$$
\begin{aligned}
d\Phi^1=&-\frac{i}{4}\Phi^{12}-\frac{i}{4}\Phi^{1\bar 2}+
\frac{i}{4}\Phi^{2\bar1}+\frac{a}{2}\Phi^{2\bar2}-\frac{i}{4}\Phi^{\bar1\bar2}\,,\\
d\Phi^2=& 0,
\end{aligned}
$$
and in particular, $J_a$ is a non integrable almost-complex structure.
For every $a\in\mathbb{R}$,
we fix the following Hermitian metric
$$
\omega_a:=\frac{i}{2}\left(\Phi^1\wedge\bar\Phi^1+
\Phi^2\wedge\bar\Phi^2\right).
$$
A direct computation gives that
$$
d\omega_a=i\frac{a}{4}\Phi^{12\bar2}-i\frac{a}{4}\Phi^{2\bar1\bar2}=\theta_a\wedge\omega_a
$$
with $\theta_a=\frac{a}{2}(\Phi^1+\bar\Phi^1)$. In particular,
\begin{enumerate}
 \item[$\bullet$]$d\theta_a=0$,
 \item[$\bullet$] $\omega_a$ is an almost-K\"ahler metric if and only if $a=0$.
\end{enumerate}
%
Therefore, for $a\neq 0$, the Lee form $\theta_a$ of the almost-Hermitian metric $\omega_a$ is closed and not $d$-exact, hence $\omega_a$ is a strictly locally conformally K\"ahler metric.\\
Notice that, for $a^2<1$, $J_a$ admits a compatible almost-K\"ahler metric, indeed
$$
\tilde\omega_a:=\omega_a+\frac{a}{2}\Phi^{1\bar 2}-\frac{a}{2}\Phi^{2\bar 1}
$$
is a Hermitian metric such that
$$
d\tilde\omega_a=0.
$$
Hence, by Theorem \ref{thm:dim4-lck-gck} for the Hodge numbers we have on $(X,J_a)$, with $a^2<1$,
\begin{itemize}
 \item $
h^{1,1}_{\delbar,\omega_a}=b_-=2\qquad \hbox{\rm for}\,a\neq 0
$\\
\item $
h^{1,1}_{\delbar,\omega_0}=b_-+1=3
$\\
\item $
h^{1,1}_{\delbar,\tilde\omega_a}=b_-+1=3
$
\end{itemize}

This, in particular, partially answers to Question 6.2 in \cite{holt-zhang}, giving explicit examples of compact almost-complex $4$-dimensional manifolds which admit an almost-K\"ahler metric and also admit a non almost-K\"ahler Hermitian metric with $h^{1,1}_{\delbar}\neq b_-+1$.\\
Moreover, this answers to Kodaira and Spencer's question, showing that also the Hodge number $h^{1,1}_{\delbar}$ depend on the Hermitian metric and not just on the almost-complex structure.\\
For the sake of completeness we write down the PDE's system that one should solve in order to find a basis for $\mathcal{H}^{1,1}_{\delbar,\omega_a}$.\\
Let $\psi\in A^{1,1}(X)$ be an arbitrary $(1,1)$-form on $X$, then $\psi$ can be written as
$$
\psi=A \Phi^{1\bar1}+B \Phi^{1\bar2}+L \Phi^{2\bar1}+M \Phi^{2\bar2}
$$
where $A,\,B,\,L,\,M$ are smooth functions on $X$.\\

By the complex structure equations, we get
$$
\delbar\psi=\bar V_2(A) \Phi^{1\bar1\bar2}
-\bar V_1(B) \Phi^{1\bar1\bar2}+
\bar V_2(L) \Phi^{2\bar1\bar2}-
\bar V_1(M) \Phi^{2\bar1\bar2}-
\frac{a}{2}A \Phi^{2\bar1\bar2}+
\frac{i}{4}B \Phi^{2\bar1\bar2}-
\frac{i}{4}L \Phi^{2\bar1\bar2}\,,
$$
hence $\delbar\psi=0$ if and only if
$$
\begin{aligned}
\bar V_2(A)-\bar V_1(B)=&\,0\,,\\
\bar V_2(L)-\bar V_1(M)-\frac{a}{2}A+\frac{i}{4}B-\frac{i}{4}L=&\,0\,.
\end{aligned}
$$
Now, if we denote with $*$ the Hodge-$*$-operator with respect to the metric $\omega_a$, we have
$$
*\psi=M \Phi^{1\bar1}-B \Phi^{1\bar2}-
L \Phi^{2\bar1}+A \Phi^{2\bar2}
$$
and
$$
\del*\psi=-V_2(M) \Phi^{12\bar1}+
V_2(B) \Phi^{12\bar2}-
V_1(L) \Phi^{12\bar1}+V_1(A) \Phi^{12\bar2}+
\frac{a}{2}M\Phi^{12\bar2}+
\frac{i}{4}B \Phi^{12\bar2}-
\frac{i}{4}L \Phi^{12\bar2}\,,
$$
hence $\del*\psi=0$ if and only if
$$
\begin{aligned}
V_2(M)+ V_1(L)=&\,0\,,\\
V_2(B)+V_1(A)+\frac{a}{2}M+\frac{i}{4}B-\frac{i}{4}L=&\,0\,.
\end{aligned}
$$
Therefore, $\psi$ is harmonic if and only if 
$$
\left\{
\begin{aligned}
\bar V_2(A)-\bar V_1(B)=&\,0\,,\\
\bar V_2(L)-\bar V_1(M)-\frac{a}{2}A+\frac{i}{4}B-\frac{i}{4}L=&\,0\,,\\
V_2(M)+ V_1(L)=&\,0\,,\\
V_2(B)+V_1(A)+\frac{a}{2}M+\frac{i}{4}B-\frac{i}{4}L=&\,0\,.
\end{aligned}
\right.
$$
Since we know that $h^{1,1}_{\delbar,\omega_a}=b_-=2$, the solution of this system is given by $A,B,L,M$ constants and
$$
A=-M=\frac{i}{2a}B-\frac{i}{2a}L.
$$
Hence,
$$
\mathcal{H}^{1,1}_{\delbar,\omega_a}=\mathbb{C}\left\langle
\frac{i}{2a}\Phi^{1\bar1}+\Phi^{1\bar2}-\frac{i}{2a}\Phi^{2\bar2}\,,
-\frac{i}{2a}\Phi^{1\bar1}+\Phi^{2\bar1}+\frac{i}{2a}\Phi^{2\bar2}
\right\rangle\,.
$$
}\end{ex}

\medskip

Since in the integrable case, on K\"ahler manifolds every locally conformally K\"ahler metric is globally conformally K\"ahler we want to put in evidence the following
\begin{prop}
The Kodaira-Thurston manifold with the almost-complex structure $J_a$ constructed above admits both almost-K\"ahler metrics and (strictly) locally conformally K\"ahler metrics.
\end{prop}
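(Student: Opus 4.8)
The plan is to prove the Proposition simply by assembling the two explicit metrics produced in Example \ref{ex:kt-lck} and checking the relevant defining conditions, since both metrics live on the \emph{same} almost-complex manifold $(X,J_a)$. Throughout I restrict to the range $a\in\R\setminus\{0\}$ with $a^2<1$, which is exactly where the two constructions coexist.

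First I would handle the (strictly) locally conformally K\"ahler metric, for which I take $\omega_a$ itself. From the complex structure equations for $d\Phi^1$ and $d\Phi^2$ recorded above one computes $d\omega_a=\theta_a\wedge\omega_a$ with $\theta_a=\tfrac{a}{2}(\Phi^1+\bar\Phi^1)$. Rewriting this in the real coframe via $\Phi^1+\bar\Phi^1=2(e^1+ae^4)$ gives $\theta_a=a\,e^1+a^2e^4=a\,dx_1+a^2\,dx_4$, which is closed because $de^1=de^4=0$. To see that $\theta_a$ is not $d$-exact I would invoke the de Rham cohomology of $X$: the classes $[e^1]$ and $[e^4]$ are part of a basis of $H^1_{dR}(X;\R)$, so $[\theta_a]=a[e^1]+a^2[e^4]\neq 0$ whenever $a\neq 0$. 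Hence $\omega_a$ is strictly locally conformally K\"ahler, and this conclusion in fact needs only $a\neq 0$, independently of the bound $a^2<1$.

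Next I would exhibit the almost-K\"ahler metric
$$
\tilde\omega_a:=\omega_a+\tfrac{a}{2}\Phi^{1\bar 2}-\tfrac{a}{2}\Phi^{2\bar 1},
$$
and verify the two properties it must satisfy. For positive-definiteness, note that $\tilde\omega_a$ is the $(1,1)$-form associated with the Hermitian matrix $\left(\begin{smallmatrix} 1 & -ia \\ ia & 1 \end{smallmatrix}\right)$, whose determinant is $1-a^2$; it is therefore a genuine Hermitian metric precisely when $a^2<1$, which explains the restriction on $a$. For closedness I would compute $d\tilde\omega_a$ directly from the structure equations and check that the terms coming from $d\omega_a$ cancel against those coming from $d\!\left(\tfrac{a}{2}\Phi^{1\bar 2}-\tfrac{a}{2}\Phi^{2\bar 1}\right)$, yielding $d\tilde\omega_a=0$, i.e. $\tilde\omega_a$ is almost-K\"ahler.

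There is no serious obstacle here: the statement is a bookkeeping consequence of computations already carried out in Example \ref{ex:kt-lck}. The only points requiring genuine (if routine) attention are the cancellation in the evaluation of $d\tilde\omega_a$ and the verification that $\theta_a$ is not exact, the latter resting on the explicit description of $H^1_{dR}(X;\R)$ for the Kodaira--Thurston nilmanifold. Combining the two paragraphs, for every $a$ with $0<a^2<1$ the manifold $(X,J_a)$ carries both the strictly locally conformally K\"ahler metric $\omega_a$ and the almost-K\"ahler metric $\tilde\omega_a$, which proves the Proposition.
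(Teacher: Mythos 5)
Your proposal is correct and follows the same route as the paper, which proves this Proposition implicitly by the computations of Example \ref{ex:kt-lck}: $\omega_a$ is strictly locally conformally K\"ahler since $\theta_a=\frac{a}{2}(\Phi^1+\bar\Phi^1)$ is closed but not exact for $a\neq 0$, and $\tilde\omega_a=\omega_a+\frac{a}{2}\Phi^{1\bar2}-\frac{a}{2}\Phi^{2\bar1}$ is a closed positive $(1,1)$-form for $a^2<1$. Your added details (the determinant criterion $1-a^2>0$ for positivity and the identification $[\theta_a]=a[e^1]+a^2[e^4]\neq 0$ in $H^1_{dR}(X;\R)$) are accurate and merely make explicit what the paper leaves to the reader.
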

For other examples we refer to \cite{vaisman-lcak}.

\begin{ex}\label{ex:kt-gck}
{\rm
Now we define a different, non left-invariant, Hermitian structure on the Kodaira-Thurston manifold $X$.\\
First we define a non-integrable left-invariant complex structure $J$ on $X$ setting
as global co-frame of $(1,0)$-forms
$$
\varphi^1:=e^1+ie^3\,,\qquad
\varphi^2:=e^2+ie^4
$$
and the corresponding structure equations become
$$
d\varphi^1=-\frac{i}{4}\varphi^{12}-\frac{i}{4}\varphi^{1\bar 2}+
\frac{i}{4}\varphi^{2\bar 1}-\frac{i}{4}\varphi^{\bar 1\bar2}\,,\quad
d\varphi^2=0.
$$
We will denote with $\left\lbrace W_1,W_2\right\rbrace$ its dual frame, more precisely
$$
W_1:=\frac{1}{2}\left(e_1-ie_3\right)\,,\quad
W_2:=\frac{1}{2}\left(e_2-ie_4\right)\,,
$$
namely,
$$
W_1=\frac{1}{2}\left(\del_{x_1}-i\del_{x_3}\right)\,,\quad
W_2=\frac{1}{2}\left(\del_{x_2}+x_1\del_{x_3}-i\del_{x_4}\right).
$$
We consider now on $(X,J)$ the following $1$-parameter family of almost-Hermitian metrics
$$
\omega_{tf}=e^{2tf(x_2)}e^1\wedge e^3+e^2\wedge e^4
$$
where $f=f(x_2)$ is a $\mathbb{Z}$-periodic smooth function with $e_2(f)\neq 0$.
In particular, $f$ induces a smooth function on $X$.
Then it is immediate to check that the almost-complex structure $J$ is compatible with $
\omega_{tf}$ so that it defines a positive definite Hermitian metric on $X$.
In fact, the metric $\omega_{tf}$ is almost-K\"ahler if and only if $t=0$. Indeed, by the structure equations and the fact that $e_2(f)=f'(x_2)\neq 0$ by assumption, we have
$$
d\omega_{tf}=-2te^{2tf(x_2)}e_2(f)e^{123}.
$$
Hence $\omega_{tf}$ is a Hermitian deformation of an almost-K\"ahler metric on $X$.\\
Moreover, notice that 
$$
d\omega_{tf}=\theta_{tf}\wedge\omega_{tf},
$$
with $\theta_{tf}=2tf'(x_2)e^2=2t\,df$, namely the Lee form of $\omega_{tf}$ is $d$-exact, which means that $\omega_{tf}$ is globally conformally K\"ahler.
A direct computation shows that, in fact
$$
e^{-2tf(x_2)}\omega_{tf}
$$
is an almost-K\"ahler metric. Hence, by Theorem \ref{thm:dim4-lck-gck}
for the Hodge numbers we have on $(X,J)$,
$$
h^{1,1}_{\delbar,\omega_{tf}}=b_-+1=3.
$$
For completeness we write down the system that one should solve in order to find a basis for $\mathcal{H}^{1,1}_{\delbar,\omega_{tf}}$.\\
Let $\psi\in A^{1,1}(X)$ be an arbitrary $(1,1)$-form on $X$, then $\psi$ can be written as
$$
\psi=A \varphi^{1\bar1}+B \varphi^{1\bar2}+L \varphi^{2\bar1}+M \varphi^{2\bar2}
$$
where $A,\,B,\,L,\,M$ are smooth functions on $X$.\\
By the complex structure equations, we get
$$
\delbar\psi=\bar W_2(A)\varphi^{1\bar1\bar2}
-\bar W_1(B)\varphi^{1\bar1\bar2}+
\bar W_2(L)\varphi^{2\bar1\bar2}-
\bar W_1(M)\varphi^{2\bar1\bar2}+
\frac{i}{4}B\varphi^{2\bar1\bar2}-
\frac{i}{4}L\varphi^{2\bar1\bar2}\,,
$$
hence $\delbar\psi=0$ if and only if
$$
\begin{aligned}
\bar W_2(A)-\bar W_1(B)=&\,0\,,\\
\bar W_2(L)-\bar W_1(M)+\frac{i}{4}B-\frac{i}{4}L=&\,0\,.
\end{aligned}
$$
Now, if we denote with $*$ the Hodge-$*$-operator with respect to the metric $\omega_{tf}$ 
we have that a unitary frame is given by
$$
\psi_1=\frac{1}{\sqrt 2}e^{tf(x_2)}\varphi^1\,,\quad
\psi^2=\frac{1}{\sqrt 2}\varphi^2
$$
and hence we have
$$
*\psi=Ae^{-2tf(x_2)}\varphi^{2\bar2}-B\varphi^{1\bar2}-
L\varphi^{2\bar1}+Me^{2tf(x_2)}\varphi^{1\bar1}
$$
and
$$
\del*\psi=-W_2(Me^{2tf(x_2)})\varphi^{12\bar1}+
W_2(B)\varphi^{12\bar2}-
W_1(L)\varphi^{12\bar1}+W_1(Ae^{-2tf(x_2)})\varphi^{12\bar2}+
\frac{i}{4}B\varphi^{12\bar2}-
\frac{i}{4}L\varphi^{12\bar2}\,,
$$
hence $\del*\psi=0$ if and only if
$$
\begin{aligned}
W_2(Me^{2tf})+ W_1(L)=&\,0\,,\\
W_2(B)+W_1(Ae^{-2tf(x_2)})+\frac{i}{4}B-\frac{i}{4}L=&\,0\,.
\end{aligned}
$$
Therefore, $\psi$ is harmonic if and only if
$$
\left\{
\begin{aligned}
\bar W_2(A)-\bar W_1(B)=&\,0\,,\\
\bar W_2(L)-\bar W_1(M)+\frac{i}{4}B-\frac{i}{4}L=&\,0\,,\\
W_2(Me^{2tf(x_2)})+ W_1(L)=&\,0\,,\\
W_2(B)+W_1(Ae^{-2tf(x_2)})+\frac{i}{4}B-\frac{i}{4}L=&\,0\,.
\end{aligned}
\right.
$$
Since we know that 
$$h^{1,1}_{\delbar,\omega_{tf}}=b_-+1=3,$$ 
the solution of this system is given by $A$ complex constant, $B=L$ complex constants and $$M=M_0\,e^{-2tf(x_2)}$$ where $M_0$ is a complex constant.
Hence,
$$
\mathcal{H}^{1,1}_{\delbar,\omega_{tf}}=\mathbb{C}\left\langle
\varphi^{1\bar1}\,,
\varphi^{1\bar2}+\varphi^{2\bar1}\,,
e^{-2tf(x_2)}\varphi^{2\bar2}
\right\rangle\,.
$$
}\end{ex}
\begin{rem}{\rm
It has to be remarked that solving these kind of PDE's systems is not an easy task. Indeed, as a general method, one could use Fourier analysis to expand the unknown complex valued functions $A,B,L,M$, obtaining a first order ODE's system on the Fourier coefficients of $A,B,L,M$, which, as far as we know, is very challenging to solve (cf. also \cite{holt-zhang,holt-zhang-2} for further comments).
}\end{rem}

\end{document}